\journalname{Annals of the Institute of Statistical Mathematics}
\newcommand{\multinom}[2]{\genfrac{[}{]}{0pt}{}{#1}{#2}}
\newcommand{\bn}{\mathbf{n}}
\newcommand{\br}{\mathbf{r}}
\newcommand{\bp}{\mathbf{p}}
\newcommand{\bq}{\mathbf{q}}
\newcommand{\bm}{\mathbf{m}}
\newcommand{\bone}{\mathbf{1}}
\newcommand{\hmo}{\bar{h}}
\newcommand{\hmt}{\tilde{h}}
\newcommand{\Qmo}{\bar{Q}}
\newcommand{\Hmo}{\bar{H}}
\newcommand{\Q}{Q(\bn; q; m)}
\begin{document}

\title{The $m$-th longest runs of multivariate random sequences}
\author{Yong Kong}


\titlerunning{The $m$-th longest runs}        

\institute{
  Yong Kong \at
  Department of Biostatistics \\
  School of Public Health \\
  Yale University \\
  New Haven, CT 06520, USA \\
email: \texttt{yong.kong@yale.edu} }

\date{}

\maketitle

\newpage

\begin{abstract}

The distributions of the $m$-th longest runs 
of multivariate random sequences are considered.
For random sequences made up of $k$ kinds of letters,
the lengths of the runs are sorted in two ways to give two 
definitions of run length ordering.
In one definition,
the lengths of the runs are sorted separately for each letter type.
In the second definition, the lengths of all the runs are sorted together. 
Exact formulas are developed for the distributions of the $m$-th longest runs
for both definitions.
The derivations are based on a two-step method that is applicable
to various other runs-related distributions,
such as joint distributions of several letter types
and multiple run lengths of a single letter type.

\keywords{
generating function \and
combinatorial identities \and
randomness test \and
distribution-free statistical test \and
runs length test \and
biological sequence analysis
}

\end{abstract}
\newpage

\section{Introduction} \label{S:intro}

Run statistics have been used in various disciplines
to test the nonrandomness in sequences 
\citep{Balakrishnan2002,Godbole1994,Knuth1997b}.
For the related topic of scan statistics see for example 
\citep{Glaz2001}.
The research in this area has been revived recently
because of the applications in biological related problems,
such as sequence analysis and genetic analysis.

One of the commonly used runs tests is the longest run test.
An unusual long consecutive appearance of one type of letter 
usually indicates the nonrandom nature of the process
that generates the sequence.
These long consecutive appearances (runs), however,
are usually obscured by  noises or other processes
so that they don't reveal themselves to the observers,
making the application of the longest run test difficult or impossible.
For example, when we consider biological sequences
such as DNA sequences, the longest run of one
particular letter type might have been broken into several
shorter ones due to either biological mutations
or errors that occurred in the process of reading out these sequences.
For such sequences, 
it would be difficult to have a single run of statistically significant 
length. 
For example, for a binary system of $17$ total elements,
with $10$ elements of the first letter type and 
$7$  elements of the second letter type,
the longest run of the first letter type
needs a length of $7$ to achieve statistically significance
with a cutoff of $\alpha=0.05$:
$P(l_0 \ge 7) = 0.049$ (see Eq.~\eqref{E:N_m_ge_q_3}).
On the other hand, if we use the second longest run,
it requires only $l_1 \ge 4$ to achieve statistically significance
for the same significance level:
$P(l_1 \ge 4) = 0.041$ (see again Eq.~\eqref{E:N_m_ge_q_3}).
One of the goals of this paper is to develop 
explicit, easily calculated  formulas for the $m$-th longest runs of
multivariate random sequences,
where $m$ is an arbitrary nonnegative integer.
As shown in Figure~\ref{F:W200_300}
and Table~\ref{T:avg_var}, as $m$ becomes bigger,
the distributions become narrower, 
so it might become easier to tell whether the observed statistic
 comes from one distribution or the other.

The distributions of the longest runs and other runs-related distributions
have been studied by previous researchers
for independent trials and Markov dependent trials
\citep{Burr1961,Philippou1985,Philippou1986,Schilling1990,Koutras1993,Koutras1995,Lou1996,
Muselli2000,fu2003b,Eryilmaz2006,Makri2007}.
Based on the results of \citet{Mood1940},
a  distribution is derived which gives probability of
at least one run of a given length or greater for the special
case of binary systems where each kind of object has
the same number of elements ($n_1 = n_2$)
\citep{Mosteller1941}. 
The formulas involve double summations. 
These results are simplified later
\citep[pp. 255-259]{,Olmstead1958,Bradley1968},
again for binary systems. 
A recursion based algorithm is given for the distribution of 
the longest run of any letter type 
of multiple object systems \citep{Schuster1996}.
\citet{Morris1993} 
had similar objectives to ours:
they obtained exact, explicit formulas for 
multiple objects containing any minimum collection of specified 
lengths.
Their formulas were obtained by using convoluted combinatorial
arguments.
Different from their approach, we will use a simple method
that can derive various distribution in a unified, almost mechanical
way. This is the second major goal of this paper:
to introduce a systematic method 
that can treat various distributions in a unified approach.

Earlier studies of run statistics usually used  ingenious \emph{ad hoc} 
combinatorial methods, which sometimes became very tedious. 
In \citep{Kong2006} a systematic method to study various
run statistics in multiple letter systems was developed.
Two of the commonly-used run tests,
the total number of runs test
and the longest run test, were investigated in detail by using the
general method.
The method was later applied to other commonly used runs tests
\citep{Kong2013,Kong2014,Kong2015}.
In this paper, 
we extend the method to investigate the distributions
of the $m$-th longest runs for multi-letter systems.
Two different definitions of the run length order
will be studied.
For the first definition, 
the lengths of the runs are sorted separately for each letter type.
The formula developed in 
\citep[Theorem 10]{Kong2006} is a special case
for this definition, with $m_i=0$ for each letter type.
For the second definition,
the lengths of all the runs from all letter types are sorted together.
The distributions of both definitions can be considered as special cases
of the general two-step method discussed in Section~\ref{S:2-step}.

In the general setting the method involves two steps.
In the first step,
we only need to consider
the arrangements of a \emph{single} letter type 
that meet the restrictions 
we impose on that letter type, such as the lengths of the runs
or the number of runs, without worrying about the
complicated combinations with other letter types.
It simplifies the enumeration tasks considerably
when only one letter type is considered.
The number of such arrangements of a single letter type, say the $i$-th type,
with $n_i$ elements and $r_i$ number of runs, 
is specified in Eq.~\eqref{E:gen2} by $U(n_i, r_i; X_i)$,
where $X_i$ is a place-holder for other restrictions
in addition of $n_i$ and $r_i$.
In the second step, 
the quantities $U(n_i, r_i; X_i)$, which are for individual letter types,
are then combined together by the function $F(\br)$
(as shown in Eq.~\eqref{E:F}) to get the distribution of the whole system.
The function $F(\br)$ gives the number of configurations
to arrange in a line 
$r_1$ blocks of the first letter type,  
$r_2$ blocks of the second letter type,  etc.,
without the blocks of the same letter type touching each other.
The explicit expression of $F(\br)$ (Eq.~\eqref{E:F})
makes it possible to obtain explicit expressions for various kind of
run-related distributions.
These expressions can often be simplified
by manipulating binomial and multinomial coefficients,
which can be done mechanically using the 
Wilf-Zeilberger method \citep{AEQB}.

The major results of this article are Theorem~\ref{Th:N1}
and Theorem~\ref{Th:N2}.
These theorems are for the $m$-th longest runs in systems 
with arbitrary number of letter types
under the two different definitions of the run length ordering.
Both results are obtained by using the simple yet quite general
consideration discussed in Theorem~\ref{Th:gen2}.
It's interesting to note that, when compared to
the formulas for the special case of the longest runs ($m=0$),
the only difference is that 
the general formulas 
for the arbitrary $m$-th longest runs contain an extra binomial factor
$(-1)^m \binom{j-1}{m}$ where $j$ is a summation variable
(see Eqs.~\eqref{E:hm}, \eqref{E:Hm}, 
\eqref{E:N_m_ge_q_3}, and \eqref{E:Qm_mix}).

There are many definitions of runs in the literature. 
In this article
we use the classical definition of \citet{Mood1940}, 
which asserts
that consecutive runs of one letter type must be separated by other 
letter types.
This is also the definition we used in the previous work \citep{Kong2006}.

Two kinds of models are usually used
when the distributions of runs are studied.
If the numbers of elements for each letter type are fixed,
the models are known as \emph{conditional} models.
If the elements are not fixed but chosen from a multinomial population,
the models are called \emph{unconditional}.
For both of these  models, 
exact finite distributions and asymptotic distributions have
been investigated in the past.  
The results presented in this article are 
exact distributions conditioned on the compositions of systems under study,
i.e., the numbers of each letter type are fixed.
These exact distributions are particular useful for relatively short sequences
and other situations where asymptotic results cannot be applied.
Once the conditional distribution is obtained,
it is usually easy to get unconditional distribution 
by the multinomial theorem.

Throughout the article we reserve the letter $k$ for the number of 
letter types in the system, 
and use $n_i$ as the number of elements of the $i$-th letter type.
The total number of elements of the system is $n = \sum_{i=1}^k n_i$.
The letter $m$ (with index if necessary) 
is used to indicate the run order.
For the first definition of the run ordering, 
$m_i=0$ is used to index the longest run of the $i$-th letter type, 
and $m_i=1$ is the index of the second longest run, etc.
For the second definition, since the letters are pooled together
when the run lengths are ordered, the subscript on $m$ is no longer needed.
In this case, $m=0$ indicates the longest run of the whole system,
and $m=1$ is the second longest run, etc.

We denote by the bold letters 
the tuples with $k$ elements, such as
$\bn = (n_1, n_2, \dots, n_k)$,
$\br = (r_1, r_2, \dots, r_k)$,
$\bp = (p_1, p_2, \dots, p_k)$, and similarly for other symbols.
We use $\binom{n}{m}$ for the binomial coefficient ($n$ choose $m$),
and $\multinom{p_1+\cdots+p_k}{p_1, \dots, p_k} = 
\multinom{p}{p_1, \dots, p_k} = \multinom{p}{\bp}
= p! /(p_1! \cdots p_k!)$ as the multinomial coefficient,
with $p = \sum_{i=1}^k p_i$.
When there is no ambiguity, the $k$ nesting summations
will be abbreviated as a single sum for clarity, for example
$\sum_{p_1 = 1}^{r_1} \cdots \sum_{p_k = 1}^{r_k} f(\bp)$ 
will be written as $\sum_{p_i = 1}^{r_i} f(\bp)$.
The coefficient of $x^m$ of a polynomial $f(x)$ is denoted as
$[x^m] f(x)$.

The paper is organized as follows.
In Section~\ref{S:2-step} we  describe the two-step method 
outlined above in a general setting.
Then in Section~\ref{S:mLong} and Section~\ref{S:2nd-def}
the method is applied to obtain the distributions of
the $m$-th longest runs,
under two different definitions of the run length ordering.

\section{A general two-step method for run-related distributions}
\label{S:2-step}

As discussed in Section~\ref{S:intro},
in the first step we only consider the enumeration of
one letter type when its elements are considered alone.
The enumeration of one letter type is considerably easier than
when all the letter types are considered together.
Let assume that for the $i$-th letter type with $n_i$ elements arranged
into $r_i$ runs, we impose one or more additional conditions,
collectively denoted as $X_i$. 
Denote  $U(n_i, r_i; X_i)$ as the number of arrangements
of $n_i$ elements of the $i$-th letter type 
in exactly $r_i$ runs with the additional restriction $X_i$ imposed.
Various methods can be used to obtain $U(n_i, r_i; X_i)$,
with generating function as one of the most powerful
and versatile methods 
(see Eq.~\eqref{E:GF_hm1} for one of such applications).

After obtaining $U(n_i, r_i; X_i)$, we need to put them together
to form a $k$-letter type system.
To do this we will use
the function $F(\br)$,
which is the number of ways
to arrange in a line $r_1$ runs of the first letter type, 
$r_2$ runs of the second letter type, etc., 
without two adjacent runs being of the same kind.
The explicit expression of function $F(\br)$ 
is given by \citep{Kong2006}:
\begin{lemma}
The function $F(\br)$ is given by
\begin{equation}\label{E:F}
  F(\br) = 
  \sum_{\substack{1 \leq p_i \leq r_i\\1 \leq i \leq k}}
  (-1)^{\sum_i(r_i - p_i)} 
          \binom{r_1 - 1}{p_1 - 1}
          \dots
          \binom{r_k - 1}{p_k - 1}
          \multinom{p_1 + \dots + p_k}{p_1, \dots, p_k}.               
\end{equation}
\end{lemma}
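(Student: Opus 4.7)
The plan is to relate $F(\br)$ to unrestricted arrangements through a decomposition into maximal runs, and then invert a binomial convolution coordinate by coordinate. First I would observe that the number of linear arrangements of $r_i$ indistinguishable blocks of letter type $i$ (for $i=1,\dots,k$), with no restriction on adjacency, is $\multinom{r}{r_1,\dots,r_k}$, where $r=\sum_i r_i$. Next, I would classify each such arrangement by its pattern of \emph{maximal runs}: if there are $p_i\le r_i$ maximal runs of type $i$, then the induced sequence of type-labels (treating each maximal run as a single super-block) is itself a strict arrangement of $p_i$ super-blocks of type $i$, and the lengths of these super-blocks are positive integers summing to $r_i$ independently in each $i$. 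The first ingredient contributes $F(\bp)$ possibilities, and the second contributes $\binom{r_i-1}{p_i-1}$ compositions in each coordinate. Summing over admissible $\bp$ yields the identity
\begin{equation*}
  \multinom{r}{r_1,\dots,r_k}
  \;=\; \sum_{\substack{1 \le p_i \le r_i\\1 \le i \le k}}
        F(\bp)\,\prod_{i=1}^k \binom{r_i-1}{p_i-1}.
\end{equation*}

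The next step is to invert this relation to solve for $F(\br)$. The main tool is the univariate inverse pair: if $g(n)=\sum_{j=1}^n \binom{n-1}{j-1}f(j)$, then $f(n)=\sum_{j=1}^n (-1)^{n-j}\binom{n-1}{j-1}g(j)$. One can verify this by the index shift $\tilde g(n)=g(n+1)$, $\tilde f(n)=f(n+1)$, which reduces the pair to the standard binomial transform, or alternatively by a short Vandermonde calculation. Because the kernel $\prod_i \binom{r_i-1}{p_i-1}$ factors as a product of univariate kernels, the multivariate inversion is obtained by applying the univariate inverse in each coordinate in succession; this gives
\begin{equation*}
  F(\br)
  \;=\; \sum_{\substack{1 \le p_i \le r_i\\1 \le i \le k}}
        (-1)^{\sum_i(r_i-p_i)}\,
        \prod_{i=1}^k \binom{r_i-1}{p_i-1}\,
        \multinom{p_1+\cdots+p_k}{p_1,\dots,p_k},
\end{equation*}
which is exactly the claimed formula.

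The only non-routine step is the maximal-run decomposition, and the care needed there is to check bijectivity: every unrestricted arrangement must arise exactly once from a unique strict arrangement on some $\bp$ together with a unique choice of composition of each $r_i$ into $p_i$ positive parts. This follows because the maximal-run structure of any finite sequence is canonical, and the induced super-block sequence alternates within each letter exactly when the strict-arrangement condition defining $F(\bp)$ is met. Once the combinatorial identity above is in hand, the rest is standard binomial inversion and a cosmetic rewriting of signs, so I do not expect any real obstacle beyond verifying the decomposition is indeed a bijection.
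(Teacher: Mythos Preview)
Your argument is correct. The maximal-run decomposition is exactly the right bijection: every word over $\{1,\dots,k\}$ with $r_i$ occurrences of letter $i$ factors uniquely as a strict arrangement of its maximal-run labels (counted by $F(\bp)$) together with independent compositions $r_i = c_{i,1}+\cdots+c_{i,p_i}$ into positive parts (counted by $\prod_i\binom{r_i-1}{p_i-1}$). The resulting identity
\[
  \multinom{r_1+\cdots+r_k}{r_1,\dots,r_k}
  = \sum_{\substack{1\le p_i\le r_i}} F(\bp)\prod_{i=1}^k \binom{r_i-1}{p_i-1}
\]
is a lower-triangular system in each coordinate with unit diagonal, and the product structure of the kernel lets you invert coordinatewise via the shifted binomial inverse pair, yielding Eq.~\eqref{E:F} exactly as you wrote.

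As for comparison: the paper does not actually prove this lemma here; it simply quotes the formula and cites Kong~(2006) for the derivation. So your write-up supplies a self-contained proof that the present paper omits. The original derivation in Kong~(2006) proceeds via generating functions (expanding a product of geometric-type factors and extracting coefficients), whereas your route is purely bijective/inclusion--exclusion. Both are short; your version has the advantage of making the combinatorial meaning of each factor transparent, while the generating-function approach in the cited work integrates more directly with the other run-statistic computations in that paper.
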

When $k=2$, $F(r_1, r_2)$ can be simplified from Eq.~\eqref{E:F}
to the following trivial expression,
\begin{equation*} 
  F(r_1, r_2) = \binom{2}{r_1 - r_2 + 1} = 
  \begin{cases} 
    2 & \text{if $r_1 = r_2$,} \\
    1 & \text{if $ | r_1 - r_2 | = 1 $,} \\
    0 & \text{otherwise,}
  \end{cases}
\end{equation*}
which is obvious from the meaning of function $F(\br)$.

\begin{proposition}
For a system with $k$ letter types, 
the total number of configurations is given by
\begin{equation}\label{E:g1}
 R(\bn) = \sum_{r_i=1}^{n_i} F(\br) 
                     \prod_{i=1}^k U(n_i, r_i; X_i) , 
\end{equation}
where
$U(n_i, r_i; X_i)$ is the number of arrangements
of $n_i$ elements of the $i$-th letter type 
in exactly $r_i$ runs with restrictions $X_i$ imposed.
\end{proposition}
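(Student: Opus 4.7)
The plan is to prove Eq.~\eqref{E:g1} by a direct counting argument that partitions the configurations of the full $k$-letter system according to the run-count vector $\br = (r_1, \dots, r_k)$, and then factorizes each such class into an arrangement of ``run slots'' (counted by $F(\br)$) together with independent choices of run-length data for each letter type (counted by $U(n_i, r_i; X_i)$).

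More concretely, I would first fix $\br$ and set up a bijection between those configurations of the system that have exactly $r_i$ runs of letter type $i$ (for every $i$) and pairs consisting of: (a) a linear arrangement of $r_1 + \dots + r_k$ labeled blocks in which exactly $r_i$ of the blocks carry label $i$ and no two adjacent blocks share the same label; and (b) for each $i$, an ordered tuple of positive run lengths $(\lambda^{(i)}_1, \dots, \lambda^{(i)}_{r_i})$ summing to $n_i$ and satisfying the restriction $X_i$. The forward map reads off, from a given configuration, the sequence of run labels to produce (a), together with the left-to-right run lengths of each letter type to produce (b); the inverse substitutes, into the $j$-th block of label $i$ in the skeleton, a block of $\lambda^{(i)}_j$ copies of letter type $i$.

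By the preceding lemma, the number of skeletons in (a) is $F(\br)$, and by definition the number of choices in (b) for letter type $i$ is $U(n_i, r_i; X_i)$. Because the per-letter-type choices in (b) fill disjoint positions of the final word and are otherwise unconstrained given the skeleton, the multiplication principle yields $F(\br)\, \prod_{i=1}^{k} U(n_i, r_i; X_i)$ configurations for the fixed $\br$. Summing over all admissible $\br$ (with $1 \leq r_i \leq n_i$, since each nonempty letter type contributes at least $1$ and at most $n_i$ runs) gives Eq.~\eqref{E:g1}.

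The only real subtlety to justify is the independence in the factorization: once the skeleton of labels is fixed, there is no interaction between the length choices for different letter types, because the ``same-letter-type not touching'' constraint has already been absorbed into the definition of $F(\br)$, while the intra-type constraint $X_i$ and the total $n_i$ depend only on data local to letter type $i$. Beyond checking this, the argument is routine bookkeeping.
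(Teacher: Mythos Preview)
Your proposal is correct and is essentially the same argument the paper relies on: the proposition is stated without a formal proof, the justification being the surrounding discussion that one first enumerates single-letter-type arrangements $U(n_i,r_i;X_i)$ and then glues them together via the block-arrangement count $F(\br)$. Your bijection between configurations with run-count vector $\br$ and pairs (skeleton of labeled blocks, per-type run-length tuples) makes this explicit, and your remark that the ``no two same-type runs adjacent'' constraint is absorbed into $F(\br)$ is exactly the point the paper takes for granted.
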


Often the time we don't want to impose the restrictions
on all of the $k$ letter types.
For example we might only be interested in the length of runs of
the first letter type, and put no restrictions on the other $k-1$ letter types.
Or we are only interested in the length of runs of
the first and the second letter types
to obtain their joint distributions.
In general, suppose 
we only impose certain restrictions on some of the
$k$ letter types, which are indexed
by $S = \{i_1, i_2, \dots,  \}$.  
Then the number of configurations of the system
 $R(\bn; S)$ can be written as
\begin{equation}\label{E:general}
 R(\bn; S) = \sum_{r_i} F(\br) 
                     \prod_{i \in S} U(n_i, r_i; X_i) 
                     \prod_{i \notin S} V(n_i, r_i) ,
\end{equation}
where $V(n_i, r_i)$ is the number of arrangements
of the $n_i$ elements of the $i$-th letter type 
in exactly $r_i$ runs without any restrictions.

%
%
\begin{theorem} 
  \label{Th:gen2}
The number of configurations $R(\bn; S)$ for a system with $k$ 
letter types
 is given by
\begin{equation}\label{E:gen2}
 R(\bn; S) = \sum_{r_i,p_i, i \in S} 
 \multinom{\sum_{i \in S} p_i + 
   \sum_{i \notin S} n_i}{p_i, n_i}
 \prod_{i \in S} 
 (-1)^{r_i - p_i} 
 \binom{r_i - 1}{p_i - 1}
 U(n_i, r_i; X_i) 
,
\end{equation}
where the set $S$ specifies the subset of letter types
on which additional restrictions are imposed.
\end{theorem}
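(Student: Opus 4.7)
The plan is to substitute the explicit expression for $F(\br)$ from Eq.~\eqref{E:F} into the general formula \eqref{E:general} and then evaluate the nuisance sums coming from the indices $i\notin S$ in closed form. Since the multinomial coefficient $\multinom{\sum_i p_i}{\bp}$ depends only on the $p_i$ and not on the $r_i$, and since the remaining pieces of $F(\br)$ factor as $\prod_i (-1)^{r_i-p_i}\binom{r_i-1}{p_i-1}$, one can group everything by coordinate $i$ and push each $r_i$-sum inside the product, separately for $i\in S$ and $i\notin S$.

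For each $i\notin S$ the restriction $X_i$ is absent, so $V(n_i,r_i)$ is simply the number of compositions of $n_i$ into $r_i$ positive parts, namely $V(n_i,r_i)=\binom{n_i-1}{r_i-1}$. The corresponding inner sum for each such $i$ is therefore
\begin{equation*}
\sum_{r_i=1}^{n_i}(-1)^{r_i-p_i}\binom{r_i-1}{p_i-1}\binom{n_i-1}{r_i-1}.
\end{equation*}
The subset-of-a-subset identity $\binom{n_i-1}{r_i-1}\binom{r_i-1}{p_i-1}=\binom{n_i-1}{p_i-1}\binom{n_i-p_i}{r_i-p_i}$ reduces this to $\binom{n_i-1}{p_i-1}(1-1)^{n_i-p_i}$, which vanishes except when $p_i=n_i$, where it equals $1$.

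Plugging this back in, the $r_i$ and $p_i$ sums for $i\notin S$ disappear and each $p_i$ with $i\notin S$ is frozen at $n_i$ inside the surviving multinomial coefficient. What remains is exactly the right-hand side of Eq.~\eqref{E:gen2}: an outer multinomial with entries $p_i$ for $i\in S$ and $n_i$ for $i\notin S$, and, for each $i\in S$, an alternating binomial factor $(-1)^{r_i-p_i}\binom{r_i-1}{p_i-1}$ still tied to $U(n_i,r_i;X_i)$.

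The only mildly delicate point is the binomial identity used to collapse the $i\notin S$ contributions; it is of the standard Vandermonde/subset-of-a-subset type, so it presents no real obstacle, and if a purely mechanical verification is desired it can be produced by the Wilf--Zeilberger method cited in the introduction. Everything else is routine rearrangement of finite sums.
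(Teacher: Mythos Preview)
Your proof is correct and follows essentially the same route as the paper: substitute $V(n_i,r_i)=\binom{n_i-1}{r_i-1}$ and the explicit form of $F(\br)$, then collapse the $r_i$-sums for $i\notin S$ via a binomial identity that forces $p_i=n_i$. The only cosmetic difference is that you invoke the trinomial-revision (subset-of-a-subset) identity directly, whereas the paper quotes the equivalent form $\sum_{r}(-1)^r\binom{n}{r}\binom{r}{m}=(-1)^n\delta_{n,m}$; these are the same identity in disguise.
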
 
\begin{proof}
The expression of  $V(n_i, r_i)$ in Eq.~\eqref{E:general} for the unrestricted arrangements of
exactly $r_i$ runs using $n_i$ elements is given by the well-known formula
\begin{equation}\label{E:V}
 V(n_i, r_i) = \binom{n_i-1}{r_i-1} .
\end{equation}
A direct interpretation of above expression is
to put $r_i-1$ bars between the $n_i-1$ spaces formed by the $n_i$ elements
to form $r_i$ runs. 
By using Eqs.~\eqref{E:F} and \eqref{E:V} and utilizing the identity
\[
  \sum_{r=0}^{n} (-1)^r \binom{n}{r} \binom{r}{m} = (-1)^n \delta_{n,m},
\]
the sums of $r_i$ in Eq. \eqref{E:general} for $i \notin S$ can be
evaluated to $(-1)^{n_i} \delta_{n_i, p_i}$.
These $\delta_{n_i, p_i}$ in turn filter out the sums of $p_i$ 
in the explicit expression of $F(\br)$  
for $i \notin S$ to a single term with $p_i = n_i$, leading to
the simplification of $R(\bn; S)$ to sums that only involve
letter types in $S$.
\end{proof}

With different assignments of the set $S$, Theorem~\ref{Th:gen2} can be used 
to obtain different kinds of distributions, such as
joint distributions of two or three letter types,
with $S=\{1,2\}$ and $S=\{1,2, 3\}$ respectively.
Several special cases for this theorem are mentioned here for:
(1) $|S|=0$, 
(2) $|S|=k$, and
(3) $|S|=1$. 
If $S$ is empty, then $R(\bn; S)$ is simplified to the trivial result
$\multinom{n}{n_i}$, as it should be:
\[
 R(\bn; S= \varnothing) = \multinom{n}{n_i} .
\]

If $S = \{1,2, \dots, k\}$, i.e., all letter types are subject to restrictions, 
then
\begin{equation} \label{E:all}
 R(\bn; S = \{1, \dots, k\}) = 
 \sum_{p_i}
 (-1)^{p_i} 	
 \multinom{p}{p_i}
 \sum_{r_i} 
\prod_{i}  
 (-1)^{r_i} 
  \binom{r_i - 1}{p_i - 1}
  U(n_i, r_i; X_i) 
.
\end{equation}
If only one letter type has restrictions, say $S=\{1\}$, then
\begin{align*}
 R(\bn; S=\{1\}) &=
\sum_{r_1=1}^{n_1} \sum_{p_1=1}^{r_1} (-1)^{r_1-p_1}
 \multinom{n-n_1+p_1}{p_1, n_2, \cdots, n_k} \binom{r_1 - 1}{p_1 - 1} U(n_1, r_1; X_1) \\
 &= \multinom{n-n_1}{n_2, \cdots, n_k}
  \sum_{r_1=1}^{n_1} \sum_{p_1=1}^{r_1} (-1)^{r_1-p_1}
  \binom{n-n_1+p_1}{p_1} \binom{r_1 - 1}{p_1 - 1} U(n_1, r_1; X_1) .
\end{align*}
By using the identity
\[
 \sum_{p=1}^r (-1)^p \binom{n+p}{p} \binom{r-1}{p-1} = (-1)^r \binom{n+1}{r},
\]
we get
\begin{corollary} 
  \label{C:k2}
For a system with the first letter type restricted
while the other letter types are unrestricted, the number of configurations
is given by
\begin{equation}\label{E:gen-k2}
 R(\bn; S=\{1\}) = \multinom{n-n_1}{n_2, \cdots, n_k}
\sum_{r_1=1}^{n_1} \binom{n-n_1 + 1}{r_1} U(n_1, r_1; X_1) .
\end{equation}
\end{corollary}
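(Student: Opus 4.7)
The plan is to start from the intermediate formula already displayed in the excerpt for $S=\{1\}$, namely
\[
 R(\bn; S=\{1\}) = \multinom{n-n_1}{n_2, \cdots, n_k}
  \sum_{r_1=1}^{n_1} \sum_{p_1=1}^{r_1} (-1)^{r_1-p_1}
  \binom{n-n_1+p_1}{p_1} \binom{r_1 - 1}{p_1 - 1} U(n_1, r_1; X_1),
\]
and to eliminate the inner sum over $p_1$ by directly invoking the cited identity
\[
 \sum_{p=1}^{r} (-1)^p \binom{n+p}{p}\binom{r-1}{p-1} = (-1)^r \binom{n+1}{r},
\]
specialized to $n\mapsto n-n_1$ and $r\mapsto r_1$. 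Pulling the common factor $(-1)^{r_1}$ out of the $p_1$-sum turns it into the left-hand side of the identity, which collapses to $(-1)^{r_1}\binom{n-n_1+1}{r_1}$; the two signs cancel, leaving precisely the single sum claimed in the corollary.

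The only real task is therefore to justify the identity. My plan is to handle it by upper negation followed by Vandermonde's convolution: rewrite $(-1)^p\binom{n+p}{p} = \binom{-n-1}{p}$, re-index by $q = p-1$ to put the sum into the standard form $\sum_{q}\binom{-n-1}{q+1}\binom{r-1}{q}$, apply Vandermonde to get $\binom{r-n-2}{r}$, and then a second upper negation converts this to $(-1)^r\binom{n+1}{r}$. This is entirely mechanical and adds no new combinatorial content beyond what is already present in the paper's style of manipulations.

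The main obstacle, such as it is, will be sign and index bookkeeping: one must check that the $(-1)^{r_1-p_1}$ pattern in the double sum aligns correctly with the $(-1)^p$ in the identity, and that the implicit vanishing of $\binom{r_1-1}{p_1-1}$ for $p_1>r_1$ makes the finite upper limit $r_1$ harmless. Once that alignment is confirmed, the multinomial prefactor $\multinom{n-n_1}{n_2,\dots,n_k}$ passes through untouched and the stated expression for $R(\bn; S=\{1\})$ is immediate.
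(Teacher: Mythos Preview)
Your proposal is correct and follows exactly the paper's own route: the paper also starts from the displayed double sum for $S=\{1\}$ and then invokes the same binomial identity (stated there without proof) to collapse the inner sum over $p_1$. The only addition is that you supply a short justification of the identity via upper negation and Vandermonde, which the paper omits; otherwise the arguments are identical.
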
 
The direct 
interpretation of Eq. \eqref{E:gen-k2}
is that the $n-n_1$ elements of the other letter types form $n -n_1 + 1$ 
intervals in a line (including the two ends).
There are $\binom{n-n_1 + 1}{r_1}$ ways for the elements of
the first letter type to choose
$r_1$ out of these $n-n_1 + 1$ intervals to form
$r_1$ runs. The multinomial factor in the front takes care of
the number of configurations the elements of the other letter types
can form among themselves.

In the following we will use this two-step method
to derive distributions of the $m$-th longest runs
under two different definitions.

\section{The first definition of the $m$-th longest run: 
run lengths sorted within each letter type} \label{S:mLong}
In this definition, the run lengths are sorted for each letter type
separately.  
For the $i$-th letter, we denote 
$l_{0}^{(i)}$ as the length of the longest run of the $i$-th letter type,
$l_{1}^{(i)}$ as the length of the second longest run of the $i$-th letter 
type, etc.
In general, $l_{m}^{(i)}$ is the length of 
the $(m+1)$-th longest run of the $i$-th letter type.
The lengths of all the runs formed by the $i$-th letter type are ordered as
\[
l_{0}^{(i)} \geq l_{1}^{(i)} \geq l_{2}^{(i)} \geq \cdots 
\geq l_{r_i - 1}^{(i)} .
\]
In other word, there are 
at least
$m+1$ runs of the $i$-th letter type whose length is longer or equal to 
$l_{m}^{(i)}$.

For example, in a $k=4$ system made up of letter types 
$\{1,2,3,4 \}$, if we have the following particular arrangement of the four
letter types 
\begin{equation} \label{E:example}
   111 \,|\, 2 \,|\, 111 \,|\, 333 \,|\, 444444 \,|\, 33 \,|\, 11111 ,
\end{equation}
then we have 
$l_{0}^{(1)} = 5$, 
$l_{1}^{(1)} = 3$, 
$l_{2}^{(1)} = 3$ for the first letter type $1$'s,
$l_{0}^{(2)} = 1$ for the second letter type $2$'s,
$l_{0}^{(3)} = 3$, 
$l_{1}^{(3)} = 2$ for the third letter type $3$'s,
and $l_{0}^{(4)} = 6$ for the fourth letter type $4$'s.
All other $l_{m}^{(i)} = 0$.

As described in Section~\ref{S:2-step},
to use the two-step method to obtain the distribution of the whole system
we first focus on a single particular letter type. 
In the following if we only deal with one letter type,
the index $i$ in $l_{m}^{(i)}$ will be omitted and
we will use $l_{m}$ for simplicity. 
Define function $h_m(n,q,r)$ as the number of ways to arrange 
the elements of a given letter type with $n$ elements in $r$ runs, 
with the length of $(m+1)$-th
longest run less than or equal to $q$, $q \ge 0$, i.e., $l_m \le q$.
In other word, at most $m$ runs can have lengths greater than $q$.
This is a specialization of 
the generic function $U(n,r; X)$ of Eq.~\eqref{E:general},
with the parameters $m$ and $q$ jointly act as the restriction
parameter $X$.
In the following we will find an explicit expression for $h_m(n,q,r)$.

By definition it is obvious that
\begin{equation} \label{E:rec}
 h_m(n,q,r) =  \sum_{i=0}^m \hmo_i(n,q,r),
\end{equation}
where 
$\hmo_i(n,q,r)$ counts for the arrangements which
have exactly $i$ runs whose lengths are greater than $q$.
We'll first find an explicit  expression for
$\hmo_i(n,q,r)$, then use the above relation to
obtain $h_m(n,q,r)$.

\begin{lemma}
  \label{L:h1}
The number of ways to arrange $n$ elements in $r$ runs with
exactly $m$ longest runs of length 
greater than $q \ge 0$ is given by
\begin{equation} \label{E:h1}
 \hmo_m(n,q,r) = 
 \begin{cases}
  0                 & \text{$q = 0$ and $r \ne m$}, \\ 
  \binom{n-1}{r-1}   & \text{$q = 0$ and $r = m$}, \\
  \sum\limits_{j=m}^{\min(r, \lfloor (n-r)/q \rfloor)} 
  (-1)^{m+j} \binom{j}{m} \binom{r}{j}\binom{n - q j -1}{r-1}
 & \text{otherwise.}
 \end{cases}
\end{equation}
\end{lemma}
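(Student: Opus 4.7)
The plan is to prove Lemma~\ref{L:h1} by viewing an arrangement of $n$ elements in $r$ runs as a composition of $n$ into $r$ positive parts (the $i$-th part being the length of the $i$-th run), and to count compositions with exactly $m$ parts exceeding $q$ via inclusion–exclusion.

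First I would dispose of the degenerate case $q=0$. Here ``length $> q$'' is the condition ``length $\geq 1$,'' which holds for every run. So the number of runs with length $> 0$ is always $r$, and the event ``exactly $m$ runs have length $> 0$'' occurs iff $r=m$. In that case the count is the standard number of compositions of $n$ into $r$ positive parts, namely $\binom{n-1}{r-1}$; otherwise it is $0$. This matches the first two branches of Eq.~\eqref{E:h1}.

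For $q \ge 1$, the main step is a sieve argument. For a fixed subset $T \subseteq \{1,\dots,r\}$ of size $j$, let $A_T$ denote the set of compositions $(x_1,\dots,x_r)$ of $n$ with $x_i \geq 1$ for all $i$ and $x_i \geq q+1$ for all $i \in T$. Substituting $y_i = x_i - q$ for $i \in T$ gives a bijection onto compositions of $n-qj$ into $r$ positive parts, so $|A_T| = \binom{n-qj-1}{r-1}$, which is zero unless $qj \leq n-r$. Summing over all $\binom{r}{j}$ subsets $T$ of size $j$ yields
\begin{equation*}
  \sum_{|T|=j} |A_T| \;=\; \binom{r}{j}\binom{n-qj-1}{r-1},
\end{equation*}
and a composition with exactly $i$ parts exceeding $q$ is counted $\binom{i}{j}$ times in this sum. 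Writing $b_i$ for the number of compositions with exactly $i$ parts $>q$, this gives the linear relation $\binom{r}{j}\binom{n-qj-1}{r-1} = \sum_{i \geq j}\binom{i}{j}b_i$.

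The final step is to invert this relation by the standard binomial Möbius inversion ($\sum_{j\ge m}(-1)^{j-m}\binom{j}{m}\binom{i}{j}=\delta_{i,m}$), giving
\begin{equation*}
  \hmo_m(n,q,r) \;=\; b_m \;=\; \sum_{j \geq m}(-1)^{j-m}\binom{j}{m}\binom{r}{j}\binom{n-qj-1}{r-1}.
\end{equation*}
The summation limits fall out automatically: $\binom{j}{m}=0$ for $j<m$, $\binom{r}{j}=0$ for $j>r$, and $\binom{n-qj-1}{r-1}=0$ when $qj>n-r$, which together give the upper bound $\min(r,\lfloor(n-r)/q\rfloor)$ and the sign $(-1)^{m+j}=(-1)^{j-m}$ stated in Eq.~\eqref{E:h1}. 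The only real care needed is in verifying the bijective substitution for $|A_T|$ and the bookkeeping for when the binomial $\binom{n-qj-1}{r-1}$ vanishes; neither is a serious obstacle, so this should be the most straightforward of the run-length counting lemmas in the paper.
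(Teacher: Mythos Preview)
Your argument is correct, but it proceeds by a genuinely different route than the paper. The paper encodes the run-length restriction in a bivariate generating function
\[
  g(x,y,q)=\frac{x(1-x^q)}{1-x}+y\,\frac{x^{q+1}}{1-x},
\]
extracts $[x^n y^m]\,g(x,y,q)^r$, and then massages the resulting sum $\binom{r}{m}\sum_j(-1)^j\binom{r-m}{j}\binom{n-(m+j)q-1}{r-1}$ into the stated form via $\binom{r}{m}\binom{r-m}{j}=\binom{r}{m+j}\binom{m+j}{m}$ and a shift $j\mapsto j-m$. Your proof instead treats the compositions directly, computes the ``at least $j$ large parts'' counts $\binom{r}{j}\binom{n-qj-1}{r-1}$ by a variable shift, and recovers the ``exactly $m$'' count by binomial M\"obius inversion, arriving at the final closed form in one step. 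Your route is more elementary and self-contained; the paper's route, on the other hand, is what integrates the lemma into its broader generating-function machinery (the same $g(x,y,q)$ is later suggested for tracking additional run-length thresholds), so each approach has its place.
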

\begin{proof}
To calculate $\hmo_m(n,q,r)$, we define generating function
$g(x, y, q)$ as
\begin{equation} \label{E:GF_hm1}
  g(x, y, q) = (x + \cdots + x^q) + y (x^{q+1} + \cdots) 
= \frac{x(1-x^q)}{1-x} + y \frac{x^{q+1}}{1-x}.
\end{equation}
If we expand $g(x, y, q)^r$, then $\hmo_m(n,q,r)$ will be
the coefficient of $x^n y^m$,  
since this term counts the number of configurations with
exactly $m$ runs whose lengths are greater than $q$ for
a total of $n$ elements. We obtain:
\begin{align} \label{E:hm1}
 \hmo_m(n,q,r) &= [x^n y^m] g(x, y, q)^r \notag \\
 &= [x^n y^m] (1-x)^{-r} \left[ x(1-x^q) + y x ^{q+1} \right]^r \notag \\
 &= [x^n] (1-x)^{-r} \binom{r}{m} 
         \left[ x(1-x^q) \right]^{r-m} x^{m(q+1)} \notag \\
 &= [x^n]  \binom{r}{m} \sum_{l} \binom{r+l-1}{r-1} 
            \sum_j (-1)^j \binom{r-m}{j} x^{l+r-m+qj+m(q+1)} \notag \\
 &= \binom{r}{m} \sum_{j} (-1)^j \binom{r-m}{j} \binom{n - (m+j)q-1}{r-1} 
	    \notag \\
 &= \sum_{j} (-1)^{j-m} \binom{j}{m} \binom{r}{j}\binom{n - q j -1}{r-1} .
\end{align}
Eq.~\eqref{E:hm1} includes the special case of $q=0$, which
can be checked explicitly.
For $q=0$, we need to put 
$n$ elements into $r$ runs with exactly $m$ runs whose lengths are
greater than zero.
Each run, by definition,
has a length greater than zero.
Hence for $q=0$, $\hmo_m(n,q,r)$ vanishes for all values of
$m$ except for  $m=r$.
This is also reflected in Eq.~\eqref{E:GF_hm1}:
when $q=0$, the only term of $y$ in $g(x, y, q)^r$ is $y^r$.
In this case there are $\binom{n-1}{r-1}$ number of ways
to arrange $n$ elements into $r$ runs.
This can be checked in Eq.~\eqref{E:hm1}:
the sum has only one nonvanishing term, which is when $j=r=m$,
leading to $\binom{n-1}{r-1}$.
\end{proof}

\begin{lemma}
  \label{L:hm}
The number of ways to arrange $n$ elements 
in $r$ runs, with the length of $(m+1)$-th
longest run less than or equal to $q \ge 0$,
 is given by
\begin{equation}\label{E:hm}
 h_m(n,q,r) = 
 \begin{cases}
   0                & \text{$q = 0$  and $r > m$,} \\
   \binom{n-1}{r-1} & \text{$q = 0$  and $r \le m$,}\\
   \sum\limits_{j=0}^{\min(r, \lfloor (n-r)/q \rfloor)}  
   (-1)^{m+j} \binom{j-1}{m} \binom{r}{j} 
   \binom{n-qj-1}{r-1} & \text{otherwise.}
 \end{cases}
\end{equation}
\end{lemma}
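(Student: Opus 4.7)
The plan is to derive the formula for $h_m(n,q,r)$ directly from Eq.~\eqref{E:rec} together with Lemma~\ref{L:h1}. Recall that $h_m(n,q,r) = \sum_{i=0}^{m} \bar{h}_i(n,q,r)$ and that Lemma~\ref{L:h1} gives, for the generic case, an explicit formula for $\bar{h}_i(n,q,r)$ as a single alternating sum over $j$ involving the factor $(-1)^{i+j}\binom{j}{i}\binom{r}{j}\binom{n-qj-1}{r-1}$. Substituting this expression and swapping the order of the two finite sums, the binomial $\binom{r}{j}\binom{n-qj-1}{r-1}$ factors out as
\begin{equation*}
h_m(n,q,r) = \sum_{j} (-1)^{j}\binom{r}{j}\binom{n-qj-1}{r-1} \sum_{i=0}^{m} (-1)^{i} \binom{j}{i}.
\end{equation*}

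The key step is then the classical finite alternating identity
\begin{equation*}
\sum_{i=0}^{m} (-1)^{i}\binom{j}{i} = (-1)^{m}\binom{j-1}{m},
\end{equation*}
which can be proved by a short induction on $m$ using Pascal's rule, or equivalently by telescoping after writing $(-1)^i\binom{j}{i} = (-1)^i\bigl[\binom{j-1}{i} + \binom{j-1}{i-1}\bigr]$. Substituting this identity into the displayed expression yields immediately the claimed formula
\begin{equation*}
h_m(n,q,r) = \sum_{j} (-1)^{m+j}\binom{j-1}{m}\binom{r}{j}\binom{n-qj-1}{r-1},
\end{equation*}
with the summation range $0 \le j \le \min(r, \lfloor (n-r)/q \rfloor)$ inherited from the vanishing of $\binom{n-qj-1}{r-1}$ and $\binom{r}{j}$ outside this interval.

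For the boundary case $q=0$ it is cleanest to argue directly rather than through the formula. By Lemma~\ref{L:h1}, $\bar{h}_i(n,0,r) = 0$ unless $i = r$, in which case it equals $\binom{n-1}{r-1}$. Thus if $r > m$ the index $i=r$ never enters the sum $\sum_{i=0}^{m} \bar{h}_i(n,0,r)$, so $h_m(n,0,r)=0$; while if $r \le m$ the index $i=r$ does lie in $\{0,\dots,m\}$ and contributes exactly $\binom{n-1}{r-1}$, giving the second case. The only real obstacle in the whole proof is the binomial identity above, but since it is standard and has a one-line inductive proof, the remainder of the argument is essentially bookkeeping: substitute, swap sums, apply the identity, and handle $q=0$ by hand.
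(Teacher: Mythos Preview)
Your proof is correct and follows essentially the same approach as the paper's own proof: substitute Lemma~\ref{L:h1} into the decomposition $h_m(n,q,r)=\sum_{i=0}^m \bar h_i(n,q,r)$, interchange the two finite sums, apply the identity $\sum_{i=0}^m(-1)^i\binom{j}{i}=(-1)^m\binom{j-1}{m}$, and then treat the $q=0$ case directly from the boundary case of Lemma~\ref{L:h1}. The only differences are cosmetic---you give a bit more detail on the summation range and on why the binomial identity holds.
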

\begin{proof}
From the  relation Eq.~\eqref{E:rec} and expression of
$\hmo_m(n,q,r)$ in Eq.~\eqref{E:h1}, we have
\begin{align} \label{E:hmnqr}
 h_m(n,q,r) &=  \sum_{l=0}^m \hmo_l(n,q,r) = \sum_{j} (-1)^{j} 
 \binom{r}{j}\binom{n - q j -1}{r-1} \sum_{l=0}^m (-1)^l \binom{j}{l} 
 \notag\\
 &= (-1)^m \sum_{j} (-1)^{j} \binom{r}{j}\binom{n - q j -1}{r-1} \binom{j-1}{m} .
\end{align}
By definition,
$h_m(n,q,r) = \binom{n-1}{r-1}$ when $m \ge r$,
the number of configurations with $n$ elements in $r$ runs.
This is reflected in the above expression as
$j=0$ is the only nonvanishing term in the sum when $m \ge r$.
As before, 
some special cases when $q=0$ should be considered. Apparently when $q=0$,
$h_m(n,q,r) = 0$ if $r>m$. When $q=0$ and $r \le m$, Eq.~\eqref{E:hmnqr}
is simplified to $\binom{n-1}{r-1}$.
\end{proof}

With the expression of $h_m(n,q,r)$ in Lemma \ref{L:hm},
we can use Eq.~\eqref{E:gen2} in Theorem~\ref{Th:gen2}
to get the distribution for the whole system. 
First, we define two sets of numbers $\bm = (m_1, \dots, m_k)$
and $\bq = (q_1, \dots, q_k)$.
Then
we denote $N(\bn; \bq; \bm)$
as the number of ways to have 
the $(m_i + 1)$-th longest run of the $i$-th letter type
\emph{equal to or less than} $q_i \ge 0$ 
for \emph{all} letters: $i = 1, \dots, k$.
In other word,
$N(\bn; \bq; \bm)$ is the number of ways 
to arrange the letters so that $\forall i  \in \{1,2, \dots, k\}$, 
$l_{m_i}^{(i)} \leq q_i$.
The Theorem 10 of \citep{Kong2006} is a special case of
$N(\bn; \bq; \bm)$ with $\bm = (0, \dots, 0)$, i.e., 
only the longest run for each letter type is considered there.
From  Eq.~\eqref{E:general} we have
\begin{equation}\label{E:Nmhm}
 N(\bn; \bq; \bm) =  \sum_{r_i = 1}^{n_i} F(\br) \prod_i h_{m_i}(n_i,q_i,r_i) .
\end{equation}
Eq.~\eqref{E:Nmhm} can be simplified if we use the explicit expression of 
$F(\br)$, as in Eq.~\eqref{E:all}.
If we put $U(n,r,X)=h_m(n,q,r)$ in Eq.~\eqref{E:all} and
define the last sum in Eq.~\eqref{E:all} as
\[
 H_m(n,q,p) = \sum_r (-1)^r \binom{r-1}{p-1} h_m(n,q,r),
\]
then the summation
of the running variable $r$ 
can be carried out and we have
\begin{theorem} 
  \label{Th:N1}
The number of ways 
to arrange the $k$ letter types so that for 
$\forall i  \in \{1,2, \dots, k\}$, 
$l_{m_i}^{(i)} \leq q_i$ is given by
\begin{equation}\label{E:NmHm}
 N(\bn; \bq; \bm) =  \sum_{p_i = 1}^{n_i} (-1)^{p_i} 
 \multinom{\sum p_i}{p_i} \prod_i H_{m_i}(n_i,q_i,p_i) ,
\end{equation}
where
\begin{equation}\label{E:Hm}
 H_m(n,q,p)= 
 \begin{cases}
  (-1)^m \binom{n-1}{p-1} \binom{n-p-1}{m-p} & 
   \text{$q = 0$ and $n \le m$,}\\
  \binom{n-1}{p-1} \left[ (-1)^m \binom{n-p-1}{m-p} - \binom{n-p-1}{n-1} 
    \right] &
   \text{$q = 0$ and $n > m$,}\\
   \sum\limits_{j=\lceil (n-p)/(q+1) \rceil}
                 ^{\lfloor (n-p)/q \rfloor} 
   (-1)^{n+m+qj+j} \binom{j-1}{m} 
   \binom{n-qj-1}{p-1} \binom{p}{n-qj-j} & \text{otherwise.}
 \end{cases}
\end{equation}
\end{theorem}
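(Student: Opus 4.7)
The plan is to combine Eq.~\eqref{E:Nmhm} with the specialization of Theorem~\ref{Th:gen2} given in Eq.~\eqref{E:all} (the case $|S|=k$, where every letter type carries a restriction), taking $U(n_i,r_i;X_i) = h_{m_i}(n_i,q_i,r_i)$. This substitution immediately yields Eq.~\eqref{E:NmHm} under the abbreviation
\[
H_m(n,q,p) \;=\; \sum_{r}(-1)^{r}\binom{r-1}{p-1}\,h_m(n,q,r),
\]
so the entire task reduces to showing that $H_m$ equals the closed form displayed in Eq.~\eqref{E:Hm}.

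For the generic case $q>0$, I would insert the explicit expression for $h_m(n,q,r)$ from Eq.~\eqref{E:hm} and swap the two summations, isolating the index $j$ coming from Lemma~\ref{L:hm}. The crux is then the inner sum
\[
T(n,q,p,j) \;=\; \sum_{r}(-1)^{r}\binom{r-1}{p-1}\binom{r}{j}\binom{n-qj-1}{r-1}.
\]
I would apply the trinomial revision $\binom{n-qj-1}{r-1}\binom{r-1}{p-1} = \binom{n-qj-1}{p-1}\binom{n-qj-p}{r-p}$ to pull $\binom{n-qj-1}{p-1}$ outside the $r$-sum, shift via $s=r-p$, expand $\binom{s+p}{j} = \sum_{\ell}\binom{p}{\ell}\binom{s}{j-\ell}$ by Vandermonde, and then invoke the orthogonality identity $\sum_{s}(-1)^{s}\binom{M}{s}\binom{s}{t} = (-1)^{M}\delta_{M,t}$ with $M=n-qj-p$ and $t=j-\ell$. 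The Kronecker delta pins $\ell=(q+1)j-n+p$, and using $\binom{p}{\ell}=\binom{p}{p-\ell}$ turns the surviving binomial into $\binom{p}{n-qj-j}$. Collecting the accumulated signs $(-1)^{m+j+p+(n-qj-p)} = (-1)^{n+m+j+qj}$ reproduces exactly the summand in Eq.~\eqref{E:Hm}, and the window $\lceil(n-p)/(q+1)\rceil \le j \le \lfloor(n-p)/q\rfloor$ drops out as the common support of $\binom{n-qj-1}{p-1}$ and $\binom{p}{n-qj-j}$.

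The degenerate case $q=0$ must be handled separately because Lemma~\ref{L:hm} splits at $r=m$ there; direct substitution reduces $H_m(n,0,p)$ to a short alternating sum in $r$ whose evaluation by a single Vandermonde convolution (together with the case split $n\le m$ versus $n>m$) reproduces the $\binom{n-p-1}{m-p}$ and $\binom{n-p-1}{n-1}$ corrections of Eq.~\eqref{E:Hm}. The principal obstacle is the sign and index bookkeeping inside $T$: pairing the trinomial revision with the Vandermonde expansion and the orthogonality relation introduces several off-by-one shifts and alternating-sign factors, and verifying that the unique surviving $\ell$ is precisely the value for which $\binom{p}{\ell}$ collapses to $\binom{p}{n-qj-j}$ demands careful attention. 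Once that single algebraic identity is pinned down, the remainder is routine binomial manipulation.
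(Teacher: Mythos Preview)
Your proposal is correct and follows exactly the route the paper takes: substitute $U=h_m$ into Eq.~\eqref{E:all}, define $H_m(n,q,p)=\sum_r(-1)^r\binom{r-1}{p-1}h_m(n,q,r)$, and evaluate the $r$-sum. The paper itself merely asserts that ``the summation of the running variable $r$ can be carried out'' without further detail, so your trinomial-revision/Vandermonde/orthogonality computation of $T(n,q,p,j)$ and the separate handling of $q=0$ are precisely the bookkeeping the paper omits.
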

The special case of $\bm=(0, \dots, 0)$ has been reported previously
\citep[Theorem 10]{Kong2006}.  
Comparing the two expressions we see that 
the only difference is the extra binomial term
$(-1)^m \binom{j-1}{m}$ for the general case of the $m$-th longest runs
 in Eq.~\eqref{E:Hm}.

If we define $L(\bn; \bq; \bm)$ as 
the number of arrangements to have at least one of the $k$ letter types,
for example, the $i$-th letter type,
to have the length of the $(m_i+1)$-th longest run equal to $q_i$, i.e.,
$\exists i \in \{1,2,\dots,k\}$, $l_{m_i}^{(i)} = q_i$, then
by definition,
$L(\bn; \bq; \bm) = N(\bn; \bq; \bm) - N(\bn; \bq - \bone; \bm)$.
\begin{corollary}
 The number of arrangements to have at least one of the $k$ letter types
 to have the length of the $(m_i+1)$-th longest run equal to $q_i$ is given by
\[
 L(\bn; \bq; \bm) = N(\bn; \bq; \bm) - N(\bn; \bq - \bone; \bm).
\]
\end{corollary}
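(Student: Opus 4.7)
The plan is to prove the corollary by a direct complementary counting argument, using only the definitions of $N(\bn;\bq;\bm)$ and $L(\bn;\bq;\bm)$ together with the obvious monotonicity of the defining event. Let $A$ denote the set of arrangements satisfying $l_{m_i}^{(i)}\leq q_i$ for every $i\in\{1,\dots,k\}$, so that $|A|=N(\bn;\bq;\bm)$, and let $B$ denote the set of arrangements satisfying $l_{m_i}^{(i)}\leq q_i-1$ for every $i$, so that $|B|=N(\bn;\bq-\bone;\bm)$. The key observation, which I would state explicitly, is that $B\subseteq A$: any arrangement whose $(m_i+1)$-th longest run of letter type $i$ has length at most $q_i-1$ certainly has it bounded by $q_i$.

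Given this inclusion, the count of interest is just $|A\setminus B|$. I would then verify by unwinding the quantifiers that $A\setminus B$ is precisely the set of arrangements counted by $L(\bn;\bq;\bm)$. Indeed, an arrangement lies in $A\setminus B$ iff $l_{m_i}^{(i)}\leq q_i$ for all $i$ while the stronger bound fails for some index, i.e.\ there exists $i$ with $l_{m_i}^{(i)}>q_i-1$. Since run lengths are integers, the combination $l_{m_i}^{(i)}\leq q_i$ and $l_{m_i}^{(i)}>q_i-1$ is equivalent to $l_{m_i}^{(i)}=q_i$, which is exactly the defining property of $L(\bn;\bq;\bm)$. Hence $L(\bn;\bq;\bm)=|A|-|B|=N(\bn;\bq;\bm)-N(\bn;\bq-\bone;\bm)$.

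There is essentially no obstacle here beyond being careful about the boundary case $q_i=0$, where $q_i-\bone$ would contain a $-1$ component; I would note that $N(\bn;\bq-\bone;\bm)=0$ in that case since the $(m_i+1)$-th longest run length is nonnegative, so the formula remains correct and consistent with the convention adopted in Theorem~\ref{Th:N1}. No invocation of Eq.~\eqref{E:NmHm} or Eq.~\eqref{E:Hm} is needed; the result is a purely set-theoretic consequence of the definitions.
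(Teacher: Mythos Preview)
Your argument is correct and is essentially the paper's own justification spelled out in detail: the paper simply asserts the identity ``by definition'' in the sentence immediately preceding the corollary, and you have unpacked that into the set-theoretic inclusion $B\subseteq A$ together with the integer-valued observation that $l_{m_i}^{(i)}\le q_i$ and $l_{m_i}^{(i)}>q_i-1$ force equality. Your explicit handling of the boundary case $q_i=0$ and your remark that $A\setminus B$ carries the implicit constraint $l_{m_j}^{(j)}\le q_j$ for all $j$ (which the paper's informal wording of $L$ leaves tacit) are welcome clarifications but do not change the substance of the argument.
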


If we define $W(\bn; \bq; \bm)$ as
the number of arrangements for all letter types to have  
the length of the $(m_i+1)$-th longest run equal to $q_i$, then
we have
\begin{corollary}
The number of arrangements for \emph{all} letter types to have  
the length of the $(m_i+1)$-th longest run equal to $q_i$ is given by
\[
 W(\bn; \bm; \bq) = \sum_{p_i = 1}^{n_i} (-1)^{p} 
 \multinom{p}{p_i} \prod_{i=1}^k 
  \left[ H_{m_i}(n_i,q_i,p_i) - H_{m_i}(n_i,q_i-1,p_i) \right] ,
\]
where $p=\sum_{i=1}^k p_i$.
\end{corollary}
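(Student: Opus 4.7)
The plan is to obtain $W(\bn;\bq;\bm)$ from $N(\bn;\bq;\bm)$ by a $k$-fold inclusion--exclusion, one layer for each letter type, and then push the inclusion--exclusion inside the product structure that is already present in Theorem~\ref{Th:N1}.

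First, note that for a single letter type the event ``the $(m_i+1)$-th longest run of the $i$-th letter type equals $q_i$'' is the set difference of the two events ``$\le q_i$'' and ``$\le q_i-1$''. Hence, writing the event ``$l_{m_i}^{(i)}=q_i$ for all $i$'' as the intersection of $k$ such differences, the ordinary inclusion--exclusion identity gives
\begin{equation*}
 W(\bn;\bq;\bm) = \sum_{T \subseteq \{1,\dots,k\}} (-1)^{|T|} \, N(\bn;\bq-\mathbf{e}_T;\bm),
\end{equation*}
where $\mathbf{e}_T$ is the 0/1 vector whose $i$-th coordinate is $1$ iff $i \in T$. This is the same idea already used for the one-letter corollary $L(\bn;\bq;\bm) = N(\bn;\bq;\bm) - N(\bn;\bq-\bone;\bm)$, applied coordinate by coordinate.

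Next, I would substitute the explicit formula \eqref{E:NmHm} for each $N(\bn;\bq-\mathbf{e}_T;\bm)$ and interchange the (finite) sum over $T$ with the sum over $p_1,\dots,p_k$. Because the integrand of \eqref{E:NmHm} is a product $\prod_i H_{m_i}(n_i,q_i,p_i)$, the subset sum over $T$ factors across the $k$ coordinates:
\begin{equation*}
 \sum_{T \subseteq \{1,\dots,k\}} (-1)^{|T|} \prod_{i=1}^k H_{m_i}(n_i,q_i - \mathbf{e}_T(i),p_i)
 = \prod_{i=1}^k \bigl[H_{m_i}(n_i,q_i,p_i) - H_{m_i}(n_i,q_i-1,p_i)\bigr].
\end{equation*}
Combining this with the prefactor $(-1)^p \multinom{p}{p_i}$ yields the claimed formula.

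No step is a genuine obstacle: the combinatorial content is entirely carried by Theorem~\ref{Th:N1}, and what remains is bookkeeping with signs. The only point requiring mild care is verifying that the formula \eqref{E:Hm} for $H_{m_i}(n_i,q_i-1,p_i)$ remains valid (and in particular gives $0$) when $q_i=0$, so that the $q_i=0$ boundary of $\bq$ does not produce spurious contributions; this is immediate from the $q=0$ cases listed in \eqref{E:Hm} together with the convention that there is no run of length $-1$.
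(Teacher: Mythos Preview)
Your argument is correct. The paper states this corollary without proof, so there is no detailed derivation to compare against; your inclusion--exclusion over subsets $T$ followed by the factorisation of $\sum_T(-1)^{|T|}\prod_i H_{m_i}(n_i,q_i-\mathbf{e}_T(i),p_i)$ is a valid and clean way to obtain the formula from Theorem~\ref{Th:N1}.

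If anything, the paper's surrounding framework suggests an even more direct route that bypasses the subset expansion entirely: in the two-step method one may take $U(n_i,r_i;X_i)=h_{m_i}(n_i,q_i,r_i)-h_{m_i}(n_i,q_i-1,r_i)$ from the outset (this counts, for a single letter type, the arrangements with $l_{m_i}=q_i$), plug this into Eq.~\eqref{E:all}, and use the linearity of $H_m(n,q,p)=\sum_r(-1)^r\binom{r-1}{p-1}h_m(n,q,r)$ in $h_m$ to read off the product of differences immediately. Your inclusion--exclusion is simply the expansion of that same product $\prod_i(a_i-b_i)$ viewed from the $N$ side, so the two routes are equivalent; the direct substitution just avoids introducing and then collapsing the $2^k$ subset sum.
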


Applying Eq.~\eqref{E:hm} in Lemma~\ref{L:hm} to
Eq.~\eqref{E:gen-k2} of Corollary~\ref{C:k2},
we can get the number of configurations of at least $m+1$ runs 
of the first letter type of length $q$ or greater, regardless of
the other letter types:
\[
 Z(\bn; q; m) = \multinom{n}{n_i}
 - \multinom{n-n_1}{n_2, \cdots, n_k}
\sum_{r_1=1}^{n_1} \binom{n-n_1 + 1}{r_1} h_m(n_1, q-1, r_1).
\] 
The summation of $r_1$ in the above equation
can be carried out, leading to
%

\begin{corollary} \label{C:Z}
The number of configurations of at least $m+1$ runs 
of the first letter type with length $q$ or greater is given by
\begin{equation} \label{E:N_m_ge_q_3}
 Z(\bn; q; m) = 
 \multinom{n-n_1}{n_2, \cdots, n_k}
 \sum_{j=1}^{\min(n-n_1+1, \lfloor n/q\rfloor)}
 (-1)^{m+j+1}
 \binom{j-1}{m}
 \binom{n-n_1+1}{j}
 \binom{n-qj}{n-n_1} .
\end{equation}
\end{corollary}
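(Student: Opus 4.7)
The plan is to start from the unsimplified expression for $Z(\bn;q;m)$ stated immediately before the corollary, namely
\[
 Z(\bn;q;m) = \multinom{n}{n_i} - \multinom{n-n_1}{n_2,\dots,n_k}\sum_{r_1=1}^{n_1}\binom{n-n_1+1}{r_1}h_m(n_1,q-1,r_1),
\]
and to carry out the $r_1$-summation explicitly using Lemma~\ref{L:hm}. Substituting the generic (``otherwise'') branch of Eq.~\eqref{E:hm} gives
\[
 h_m(n_1,q-1,r_1)=\sum_j(-1)^{m+j}\binom{j-1}{m}\binom{r_1}{j}\binom{n_1-(q-1)j-1}{r_1-1}.
\]
I would then swap the order of the $r_1$- and $j$-sums. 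The edge cases $q=1$ (where $q-1=0$) are consistent with the $q=0$ branch of Lemma~\ref{L:hm}, since $\binom{-1}{r_1-1}$-type collapse gives the same end result, so no separate treatment is required.

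The next step is to apply the absorption identity $\binom{n-n_1+1}{r_1}\binom{r_1}{j}=\binom{n-n_1+1}{j}\binom{n-n_1+1-j}{r_1-j}$ to pull the $j$-dependence out of the inner sum. Setting $s=r_1-j$, the inner sum becomes
\[
\sum_s \binom{n-n_1+1-j}{s}\binom{n_1-(q-1)j-1}{s+j-1},
\]
which, after rewriting the second factor as $\binom{n_1-(q-1)j-1}{n_1-qj-s}$, is a Vandermonde convolution equal to $\binom{n-qj}{n_1-qj}=\binom{n-qj}{n-n_1}$. Hence
\[
 \sum_{r_1}\binom{n-n_1+1}{r_1}h_m(n_1,q-1,r_1)=\sum_j(-1)^{m+j}\binom{j-1}{m}\binom{n-n_1+1}{j}\binom{n-qj}{n-n_1}.
\]

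The finishing move is to absorb the leading $\multinom{n}{n_i}$ into the $j=0$ term. Using the convention $\binom{-1}{m}=(-1)^m$, the $j=0$ contribution to the right-hand side above is $(-1)^m\cdot(-1)^m\binom{n}{n-n_1}=\binom{n}{n_1}$, and since $\multinom{n}{n_i}=\binom{n}{n_1}\multinom{n-n_1}{n_2,\dots,n_k}$, the $j=0$ term exactly cancels the $\multinom{n}{n_i}$ in $Z$. The remaining sum starts at $j=1$ and, after flipping the overall sign, has the sign factor $(-1)^{m+j+1}$ displayed in Eq.~\eqref{E:N_m_ge_q_3}.

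The main obstacle I anticipate is bookkeeping rather than mathematics: checking that the Vandermonde reindexing is valid (in particular that the effective range of $s$ covers exactly the nonzero terms) and pinning down the upper limit $\min(n-n_1+1,\lfloor n/q\rfloor)$ as arising from the vanishing of $\binom{n-n_1+1}{j}$ and $\binom{n-qj}{n-n_1}$, respectively. Once these ranges are confirmed and the $j=0$ cancellation is justified via $\binom{-1}{m}=(-1)^m$, the identity follows directly.
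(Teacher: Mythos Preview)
Your proposal is correct and is precisely what the paper means when it says ``the summation of $r_1$ in the above equation can be carried out.'' The paper gives no details for this step; your absorption/Vandermonde evaluation and the absorption of the leading $\multinom{n}{n_i}$ into the $j=0$ term via $\binom{-1}{m}=(-1)^m$ fill in exactly the omitted computation, and your handling of the $q=1$ edge case is justified by the paper's own remark after Eq.~\eqref{E:hmnqr} that the generic expression for $h_m$ already subsumes the $q=0$ branch.
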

Eq.~\eqref{E:N_m_ge_q_3} is a generalization of previous results,
such as those of 
\citep[p.257]{Bradley1968}.
For the $m$-th longest run, again the only difference
is the extra binomial term $(-1)^m \binom{j-1}{m}$.

By using Theorem~\ref{Th:gen2} and Lemma~\ref{L:hm},
the method can easily lead to joint distributions of various kinds.  
For example, 
instead of using $S=\{1\}$ to focus only on the lengths of runs
of the first letter type, we can use
$S=\{1,2 \}$ to obtain joint distributions
of both the first and the second letter types.
Other possibilities are to introduce more tracking variables
in generating function Eq.~\eqref{E:GF_hm1} to track
more run lengths within one letter type, instead of only one number $q$. 
The details are omitted here.

As for  computational complexity,
Eq.~\eqref{E:Nmhm} has $3$ nested summations over $n_i, i=1, \dots, k$: 
the inner $k$ summations for
$h_m(n,q,r)$ in Eq.~\eqref{E:hm}, the middle $k$ summations for the calculation
of $F(\br)$, and the outer $k$ summations for variables $r_i$.
Hence the computational complexity for Eq.~\eqref{E:Nmhm} 
is $O( \prod_{i=1}^k n_i^{3})$.
Eq.~\eqref{E:NmHm} of Theorem~\ref{Th:N1} simplifies the computation 
to two nested summations over $n_i$, 
and the computational
complexity is reduced to $O(\prod_{i=1}^k n_i^{2})$.

\section{The second definition the $m$-th longest run: run lengths
sorted for all letter types} \label{S:2nd-def}
In Section~\ref{S:mLong}, the $m$-th longest runs are ordered within
runs formed by individual letter types.  In this section, distributions
of $m$-th longest runs of the whole system will be developed.

For this definition the lengths of runs are sorted regardless
which letter type the run is made up of. 
The lengths of runs of \emph{the whole system} are ordered as
$l_0 \ge l_1 \ge \dots \ge l_{r-1}$, where $r$ is the total number of
runs of the system. 
The length of the longest run of the whole system
is $l_0$, 
with the length of the shortest run labeled as $l_{r-1}$.
In general $l_m$ denotes the length of the 
$(m+1)$-th longest run of the whole system.  
We define $l_i = 0$ if $i \ge r$.
If we use the same example shown previously in \eqref{E:example}, then
$l_0 = 6$, 
$l_1 = 5$, 
$l_2 = l_3 = l_4 = 3$, 
$l_5 = 2$, 
$l_6 = 1$, 
and $l_m = 0$ for $m > 6$.

We define $\Q$ as the number of ways to arrange the whole system
to have the length of the $(m+1)$-th longest run less or equal to $q$,
i.e., $l_m \le q$.  The definition of $\Q$ implies that for
all the arrangements counted by $\Q$, there are at most $m$ runs
with lengths greater than $q$.

As before, $\Q$ can be 
expressed by 
\[
 \Q = \sum_{s=0}^m \Qmo(\bn; q; s) ,
\]
where $\Qmo(\bn; q; m)$  is the number of arrangements
of the whole system
where there are exactly $m$ runs with lengths greater than $q$,
regardless of the letter types.
The numbers given by $\Q$ and $\Qmo(\bn; q; m)$
are the corresponding quantities on the whole system level
of the numbers given by $h_m(n, q, r)$ and $\hmo_m(n, q, r)$
discussed in Section~\ref{S:mLong}
for a particular given letter type.

To calculate $\Qmo(\bn; q; m)$, we use the same expression of
$\hmo_m(n, q, r)$ in 
Eq.~\eqref{E:hm1}, which is the number of ways to arrange
$n$ elements of one particular letter type in $r$ runs, with exact
$m$ runs longer than $q$.
Again the function $F(\br)$ is used
to put the whole system together:
\[
 \Qmo(\bn; q; s) = 
 \sum_{\substack{m_i=0\\\sum m_i = s}}^s
  \sum_{r_i = 1}^{n_i} 
  F(\br) 
  \prod_{i} \hmo_{m_i} (n_i, q, r_i) .
\]
Hence for $\Q$ we have
\begin{equation} \label{E:Qm1}
  \Q = \sum_{s=0}^m 
  \sum_{\substack{m_i=0\\\sum m_i = s}}^s
  \sum_{r_i = 1}^{n_i} 
  F(\br) 
  \prod_{i} \hmo_{m_i} (n_i, q, r_i) .
\end{equation}

Eq.~\eqref{E:Qm1} can be simplified.
First, by using the explicit expression of $F(\br)$ of Eq.~\eqref{E:F},
Eq.~\eqref{E:Qm1} can be simplified as
\begin{equation} \label{E:Qm2}
  \Q = \sum_{s=0}^m 
       \sum_{\substack{m_i=0\\ \sum m_i = s}}^s
	\sum_{p_i = 1}^{n_i} (-1)^{ p_i}
		\multinom{\sum p_i}{p_i} \prod_i \Hmo_{m_i} (n_i, q, p_i) ,
\end{equation}
where
\begin{equation} \label{E:Hm_mix}
 \Hmo_m(n,q,p)= 
 \begin{cases}
  (-1)^m \binom{m-1}{p-1} \binom{n-1}{m-1} & 
   q = 0, \\
	\sum\limits_{j=\lceil (n-p)/(q+1) \rceil}
                 ^{\lfloor (n-p)/q \rfloor} 
   (-1)^{n+m+qj+j} \binom{j}{m} 
   \binom{n-qj-1}{p-1} \binom{p}{n-qj-j} & \text{otherwise .}
 \end{cases}
\end{equation}

The expression of Eq.~\eqref{E:Qm2} can be further simplified by
getting rid of the selection summation on $\sum m_i = s$ in the second sum.
Let's discuss the simplification for $q=0$ and $q>0$ separately.

When $q=0$, if $m \ge n$,
we have $Q(\bn; 0; m) = \multinom{n}{n_i}$.
For $q=0$ and $m < n$,
for each summation of $m_i$ in Eq.~\eqref{E:Qm2}, we can first
ignore the selection restriction $\sum m_i = s$,
and use a variable $t$ to track $m_i$ later.
First look at the sum over one particular $m_i$:
\begin{align*}
\sum_{m_i = p_i}^{n_i} & 
            (-1)^{m_i} \binom{m_i-1}{p_i-1} \binom{n_i-1}{m_i-1} t^{m_i}\\
= & \binom{n_i-1}{p_i-1} \sum_{m_i = p_i}^{n_i} (-1)^{m_i} 
                                  \binom{n_i - p_i}{m_i - p_i} t^{m_i}\\
= & (-1)^{p_i} \binom{n_i-1}{p_i-1} (1-t)^{n_i - p_i} t^{p_i} .
\end{align*}
The nested $k$ sums of $m_i$ will then give
\[
 (-1)^p (1-t)^{n - p} t^{p} \prod_{i=1}^k \binom{n_i-1}{p_i-1} ,
\]
where $n = \sum_k n_i$ and $p = \sum_k p_i$.
The selection restriction $\sum m_i = s$ just takes the coefficient of
$t^s$ from the above expression:
\[
 [t^s] (-1)^p (1-t)^{n - p} t^{p} 
 = (-1)^{s} \binom{n-p}{s-p}.
\]
The outmost sum of $s$ can then be carried out:
\[
 \sum_{s=0}^m (-1)^{s} \binom{n-p}{s-p} 
= (-1)^{m} \binom{n-p-1}{m-p}.
\]
Putting all together, we have for $q=0$ and $m<n$,
\begin{equation} \label{E:Qm_mix_0}
 Q(\bn; 0; m) = (-1)^m \sum_{p_i} (-1)^p 
 \binom{n-p-1}{m-p} \multinom{p}{p_i} \prod_i \binom{n_i-1}{p_i-1} .
\end{equation}
Similarly, for $q > 0$ Eq.~\eqref{E:Qm2} can be simplified by first
doing the sums on each $m_i$, and then filtering out the
term with the selection restriction $\sum_{i=1}^k m_i = s$ 
 by taking the coefficient of the $t^s$ term.
In the end, after putting everything together, we obtain
\begin{theorem} 
  \label{Th:N2}
The number of configurations of a system with $l_m \le q$ 
when all lengths of runs are sorted together regardless of
letter types is given by,
when $q > 0$,
\begin{multline} \label{E:Qm_mix}
 \Q = (-1)^{n+m}
  \sum_{p_i=1}^{n_i} (-1)^p  \multinom{p}{p_i} \\
  \times
  \sum_{j_i = \lceil (n_i-p_i)/(q+1) \rceil}
                 ^{\lfloor (n_i-p_i) / q \rfloor} 
                (-1)^{j (q+1)}
                \binom{j-1}{m} 
		\prod_{i}
                \binom{n_i -q j_i -1}{p_i - 1}
                \binom{p_i}{n_i -q j_i -j_i}
\end{multline}
with $j = \sum_i j_i$, $n = \sum_k n_i$, and $p = \sum_k p_i$.
When $q=0$, if $m \ge n$,
\[
 Q(\bn; 0; m) = \multinom{n}{n_i},
\]
when $q=0$ and $m < n$,
\[
 Q(\bn; 0; m) = (-1)^m \sum_{p_i=1}^{n_i} (-1)^p 
 \binom{n-p-1}{m-p} \multinom{p}{p_i} \prod_i \binom{n_i-1}{p_i-1} .
\]
\end{theorem}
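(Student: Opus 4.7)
The plan is to pick up from Eq.~\eqref{E:Qm2}, which already expresses $\Q$ as a nested sum encumbered by the compositional constraint $\sum_i m_i = s$ attached to the product of $\Hmo_{m_i}$ factors. The strategy is to remove this constraint by introducing a formal tracking variable $t$, letting each $m_i$ range freely while weighted by $t^{m_i}$, and only at the very end restoring the constraint by extracting the coefficient of $t^s$. The outer sum over $s$ from $0$ to $m$ can then be dispatched by a standard alternating partial-sum identity.

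For the main case $q > 0$, I would substitute the second branch of Eq.~\eqref{E:Hm_mix} into each factor of $\prod_i \Hmo_{m_i}(n_i,q,p_i)$, insert the weight $t^{m_i}$, and swap the order of summation so that the $j_i$-sum sits outside the $m_i$-sum. The inner sum collapses by the binomial theorem,
\[
 \sum_{m_i \ge 0} (-t)^{m_i} \binom{j_i}{m_i} = (1-t)^{j_i},
\]
which, multiplied over $i$, gives a single factor $(1-t)^j$ with $j = \sum_i j_i$, while the sign prefactors $(-1)^{n_i+qj_i+j_i}$ combine to $(-1)^{n + j(q+1)}$. I would then reinstate the constraint via $[t^s](1-t)^j = (-1)^s\binom{j}{s}$ and carry out the outer sum using
\[
 \sum_{s=0}^m (-1)^s \binom{j}{s} = (-1)^m \binom{j-1}{m}.
\]
Collecting all factors and the multinomial $\multinom{p}{p_i}$ from Eq.~\eqref{E:Qm2} reproduces the claimed closed form for $q > 0$.

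The two $q=0$ subcases are handled separately. If $m \ge n$, the system has at most $n$ runs, so $l_m = 0 \le q$ is automatic and $\Q$ simply counts all $\multinom{n}{n_i}$ configurations. For $m < n$, the same tracking-variable trick applies to the first branch of Eq.~\eqref{E:Hm_mix}; the key step
\[
 \sum_{m_i \ge p_i} (-t)^{m_i} \binom{m_i-1}{p_i-1}\binom{n_i-1}{m_i-1} = (-1)^{p_i}\binom{n_i-1}{p_i-1}\, t^{p_i}(1-t)^{n_i-p_i}
\]
is already spelled out in the paragraphs preceding the theorem, after which the same $[t^s]$-extraction followed by the alternating partial sum yields the stated formula.

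The main obstacle is not any single hard step but rather disciplined bookkeeping: collecting the sign factors arising from $q$, $j_i$, $m_i$, and $p_i$ without slippage, justifying the interchange of the $j_i$- and $m_i$-sums (the $m_i$-sum is finite because $\binom{j_i}{m_i}=0$ for $m_i > j_i$, so no convergence issues arise), and verifying that the alternating partial-sum identity applies uniformly regardless of whether $j \le m$ or $j > m$. These are routine but must be checked in each branch to complete the argument.
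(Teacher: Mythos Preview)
Your proposal is correct and follows essentially the same route as the paper: introduce a tracking variable $t$ to decouple the constraint $\sum_i m_i=s$ in Eq.~\eqref{E:Qm2}, collapse each $m_i$-sum via the binomial theorem to produce a power of $(1-t)$, extract $[t^s]$, and then apply the alternating partial-sum identity $\sum_{s=0}^m(-1)^s\binom{j}{s}=(-1)^m\binom{j-1}{m}$. The paper spells out the $q=0$ computation in full and leaves the $q>0$ case to ``similarly''; you have simply reversed the emphasis, but the mechanics are identical.
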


If we compare Eq.~\eqref{E:Qm_mix} with Eq.~\eqref{E:NmHm},
we see that the only difference is in the term $(-1)^{m+(q+1)j}\binom{j-1}{m}$:
in Eq.~\eqref{E:NmHm} the term is calculated separately
for individual letter type as $(-1)^{m_i+(q_i+1)j_i} \binom{j_i-1}{m_i}$,
while in Eq.~\eqref{E:Qm_mix} the term is calculated for the
whole system using the $j = \sum_i j_i$.
From the definitions we see that when $m=0$, if we set all
$q_i$ in Eq.~\eqref{E:NmHm} to $q$, so that $\bq = (q, q, \dots, q)$,
$N(\bn; \bq; \mathbf{0}) = Q(\bn; q; 0)$.
This can be confirmed by comparing Eqs.~\eqref{E:NmHm} and \eqref{E:Hm}
with Eq. \eqref{E:Qm_mix}. 
For $m > 0$, this will no longer be true.

\begin{corollary}
  \label{C:W}
The number of ways to have the length of the $(m+1)$-th longest run
as $q$ for the whole system is given by
\[
 W(\bn; q; m) = \Q - Q(\bn;  q-1; m) .
\]
\end{corollary}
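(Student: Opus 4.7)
The plan is to argue directly from the definition of $Q(\bn; q; m)$ given in Section~\ref{S:2nd-def}, without invoking the explicit formula of Theorem~\ref{Th:N2}. Recall that $Q(\bn; q; m)$ was defined as the number of arrangements of the whole system satisfying $l_m \le q$, where $l_m$ denotes the length of the $(m+1)$-th longest run under the pooled ordering $l_0 \ge l_1 \ge \cdots \ge l_{r-1}$.

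First, I would denote by $\mathcal{A}_{\le t}$ the set of arrangements with $l_m \le t$, so that $|\mathcal{A}_{\le t}| = Q(\bn; t; m)$. Next, I would observe the obvious inclusion $\mathcal{A}_{\le q-1} \subseteq \mathcal{A}_{\le q}$, since any arrangement with $l_m \le q-1$ automatically satisfies $l_m \le q$. The set-theoretic difference $\mathcal{A}_{\le q} \setminus \mathcal{A}_{\le q-1}$ therefore consists of precisely those arrangements with $l_m \le q$ but $l_m \not\le q-1$, which (since $l_m$ is a nonnegative integer) is exactly the condition $l_m = q$. This is by definition the event counted by $W(\bn; q; m)$, so
\[
 W(\bn; q; m) = |\mathcal{A}_{\le q}| - |\mathcal{A}_{\le q-1}| = Q(\bn; q; m) - Q(\bn; q-1; m).
\]

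There is no real obstacle here; the corollary is a telescoping observation about cumulative counts, and all the analytical work has already been carried out in Theorem~\ref{Th:N2}, which supplies the explicit evaluation of each of the two terms on the right-hand side. The only minor bookkeeping concern is the boundary case $q = 0$, where $Q(\bn; -1; m)$ should be interpreted as zero (no arrangement can have $l_m \le -1$, since run lengths are nonnegative integers and indeed run lengths of existing runs are at least one); this is consistent with the convention implicitly used in the definition of $Q(\bn; q; m)$ and can be noted in passing.
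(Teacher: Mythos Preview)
Your proposal is correct and matches the paper's approach: the paper states this corollary without proof, as it is immediate from the definition of $Q(\bn;q;m)$ (the cumulative count $\#\{l_m \le q\}$), and your argument spells out exactly that telescoping observation. The remark on the $q=0$ boundary is a helpful addition, though not strictly needed for the paper's purposes.
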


As we can see, Eq.~\eqref{E:Qm_mix_0} is very similar in form to Eq. (28) of
\citep{Kong2006}, which calculates the number of configurations
with the \emph{total number of runs} as $r$:
\[
  T(r; \bn) = (-1)^r \sum_{p_i} (-1)^p 
 \binom{n-p}{r-p} \multinom{p}{p_i} \prod_i \binom{n_i-1}{p_i-1} .
\]
By the definition of $\Q$, $Q(\bn; 0; m)$ means 
the number of arrangements to have \emph{at most} $m$ runs with lengths
greater than $0$, i.e., with \emph{at most} $m$ runs.
The relation between $Q(\bn; 0; m)$ and $T(r; \bn)$ is obvious: 
\[
  Q(\bn; 0; m) =  \sum_{r=0}^m T(r; \bn) ,
\]
which can be checked explicitly.

In Figure~\ref{F:W200_300} the probability mass distribution
of  $W(\bn; q; m)$ for $\bn=(n_1, n_2) = (200, 300)$ (divided by
$\binom{n_1+n_2}{n_1}$) is plotted for $m=0$ to $3$.
In Table~\ref{T:avg_var}, the average, the second moment, and
the variance of the same system are listed.
The distributions become narrower when $m$ increases.

\begin{figure} 
  \centering
  \includegraphics[angle=270,width=\columnwidth]{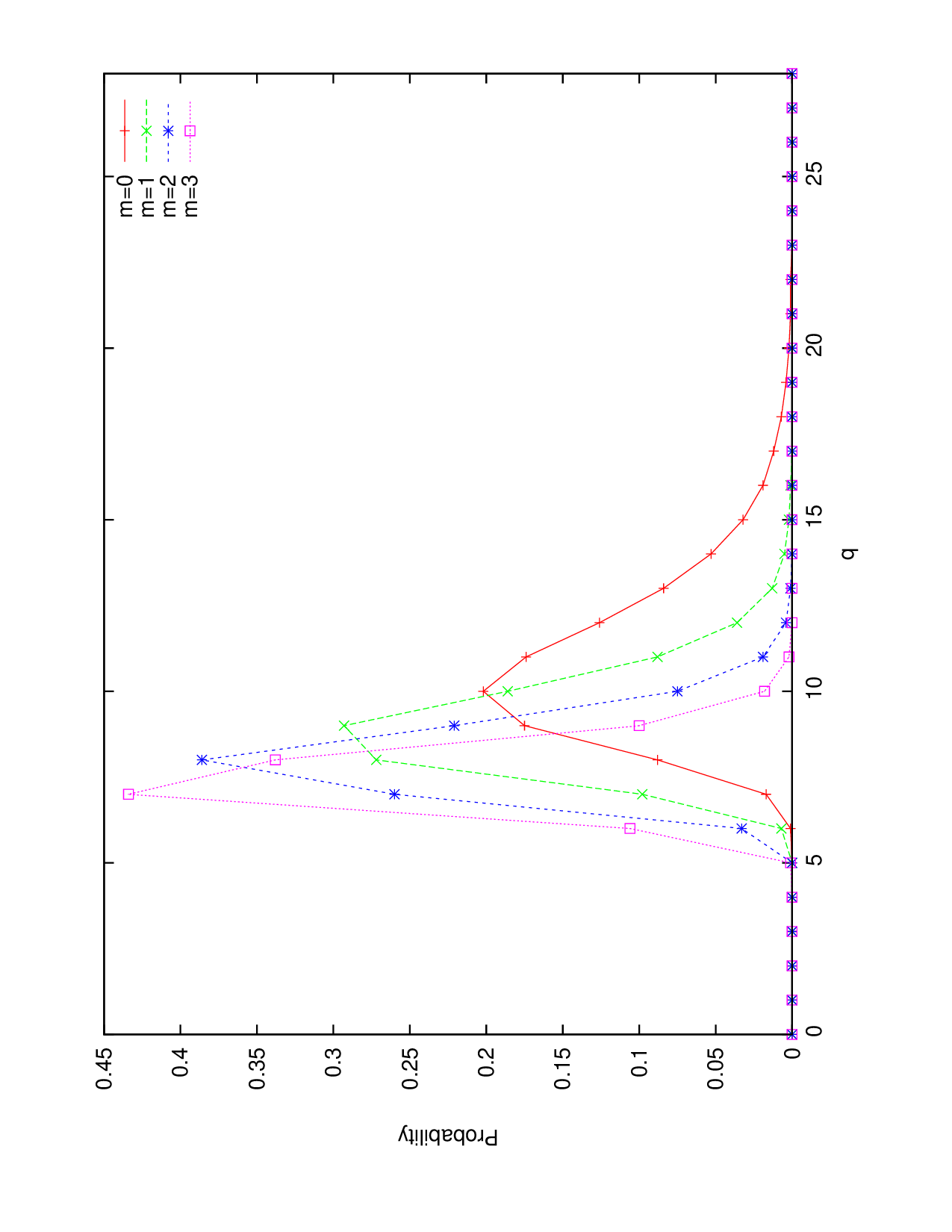}
  \caption{Probability mass distribution of 
the $m$-th longest runs of the whole system, 
for $\bn=(n_1, n_2) = (200, 300)$, $m=0$ to $3$.
The probability is calculated by $W(\bn; q; m)$ in Corollary~\ref{C:W},
 divided by
$\binom{n_1+n_2}{n_1}$.
}
\label{F:W200_300}
\end{figure}

\begin{table} 
\centering
\caption{
Average, the second moment, and
variance of the distribution
in Figure~\ref{F:W200_300}.
}\label{T:avg_var}
\begin{tabular}{ l|r|r|r }
\hline \hline
$m$ & $E(X)$  & $E(X^2)$ & $\sigma^2$ \\
\hline
$0$ & $10.997$  & $126.502$ & $5.562$ \\
$1$ &  $9.072$ &   $84.309$ & $2.006$ \\
$2$ &  $8.121$ &   $67.115$ & $1.165$ \\
$3$ &  $7.494$ &   $56.966$ & $0.809$ \\
\hline
\end{tabular}
\end{table}

\section*{Acknowledgment}
This work was supported in part by 
the Clinical and Translational Science Award UL1 RR024139 
from the National Center for Research Resources, 
National Institutes of Health.

%
%
%



\end{document}